\newtheorem{theorem}{Theorem}
\newtheorem{lemma}[theorem]{Lemma}
\theoremstyle{definition}
\newcommand{\Char}{\mbox{\rm char}}
\newcommand{\GL}{\mbox{\rm GL}}
\newcommand{\SL}{\mbox{\rm SL}}
\newcommand{\ts}[1]{\langle #1\rangle}
\begin{document}
    \title[Corrigendum to the proof of Theorem 3.4]{Corrigendum to the proof of one theorem in the article ``On multiplicative subgroups in division rings" [J. Algebra and Its Applications Vol. 15,\\ No. 3 (2016) 1550050 (16 pages)]}
    \author[Bui Xuan Hai]{Bui Xuan Hai$^{1,2}$}
    \address{[1] University of Science, Ho Chi Minh City, Vietnam; [2] Vietnam National University, Ho Chi Minh City, Vietnam.}
   \email{bxhai@hcmus.edu.vn}

\keywords{ maximal subgroups, subnormal subgroups,  $FC$-elements. \\
\protect \indent 2010 {\it Mathematics Subject Classification.} 16K20.}

 \maketitle

 \begin{abstract} In this note, the correction to the proof of one theorem in some our previous paper will be given. 
 \end{abstract}

\bigskip

In the proof of \cite[Theorem 3.4]{hai-tu} there is the following mistake: in row $-12$, page $1650050-11$, we affirmed that by applying \cite[Theorem 5]{hai-ngoc}, it follows that  $M$ is a (locally abelian)-by-finite group. But, in the reality,  this argument is not valid as it was well remarked in recently accepted paper \cite{fallah}. Then, in the main theorem of \cite{fallah}, the author has proved  the similar result as our in \cite[Theorem~ 3.4]{hai-tu}, but for more narrow class of division rings. In fact, he proved the result for centrally finite division rings, while our theorem holds for locally finite division rings.  Firstly, we must confirm that \cite[Theorem~ 3.4]{hai-tu} is always true. Moreover, this result was generalized in \cite[Theorem~ 3.1]{hai-khanh} for more general circumstance. Hence,  \cite[Theorem~ 4.1]{fallah}, and \cite[Theorem 3.4]{hai-tu} both are particular cases of \cite[Theorem~ 3.1]{hai-khanh}. Our purpose in this note is to give the another proof of  \cite[Theorem 3.4]{hai-tu}. The idea of this proof is very different from our in the proof of \cite[Theorem~ 3.1]{hai-khanh}, and has no any connection with the proof of \cite[Theorem 4.1]{fallah}.

For the next use, we need some lemmas.

\begin{lemma}\label{lem:1} Let $D$ be a locally finite division ring with the center $F$, and $G$ a subgroup of $D^*$. If $G'$ is radical over $F$, then $G'$ is locally finite and either  $G'\cong\SL(2,5)$ or $G$ is solvable.
\end{lemma}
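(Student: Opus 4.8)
The plan is to split the statement into two parts: first that $G'$ is locally finite, and then the dichotomy between $G' \cong \SL(2,5)$ and $G$ being solvable. For the first part, the key observation I would exploit is that every element of $G'$ is simultaneously a product of commutators from $G$ and radical over $F$; the reduced norm turns this into a strong finiteness constraint. Concretely, given $g \in G'$, write $g = \prod_{i}[a_i,b_i]$ with finitely many $a_i,b_i \in G \subseteq D^*$ and set $D_1 = F(a_i,b_i)$. Since $D$ is locally finite, $D_1$ is finite-dimensional over its center $Z_1 \supseteq F$, say $[D_1:Z_1] = d^2$, so the reduced norm $\mathrm{Nrd}\colon D_1^* \to Z_1^*$ is defined and multiplicative. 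As $Z_1^*$ is abelian, $\mathrm{Nrd}$ kills commutators, whence $\mathrm{Nrd}(g) = 1$. On the other hand $g$ is radical over $F$, so $g^n \in F \subseteq Z_1$ for some $n \geq 1$, and on central elements $\mathrm{Nrd}$ is the $d$-th power map; thus $1 = \mathrm{Nrd}(g^n) = (g^n)^d = g^{nd}$. Hence every element of $G'$ has finite order, i.e.\ $G'$ is a torsion group.

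Next I would upgrade ``torsion'' to ``locally finite.'' Take a finitely generated subgroup $H = \langle h_1,\dots,h_r\rangle \leq G'$ and again form the centrally finite division ring $D_2 = F(h_1,\dots,h_r)$ with center $Z_2$ and $[D_2:Z_2] = d_2^2$. Choosing a splitting field $K$ of $D_2$ gives an embedding $D_2^* \hookrightarrow \GL_{d_2}(K)$, under which $H$ becomes a finitely generated torsion subgroup of a general linear group over a field. By the classical Burnside--Schur theorem such a group is finite, so $H$ is finite; as $H$ was arbitrary, $G'$ is locally finite. This settles the first assertion.

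For the dichotomy I would argue as follows. If $G'$ is solvable then, because $G/G'$ is abelian, $G$ is solvable and we are done. So suppose $G'$ is not solvable. Being locally finite and non-solvable, $G'$ contains a finite non-solvable subgroup, and by Amitsur's classification of the finite subgroups of division rings the only non-solvable possibilities are built from the binary icosahedral group $\SL(2,5)$; in particular $\SL(2,5) \leq G'$. It then remains to show that $G'$ is exactly $\SL(2,5)$, for which I would use the radical hypothesis together with the fact that $\SL(2,5)$ acts absolutely irreducibly, so that $F[\SL(2,5)]$ is a centrally finite simple subalgebra whose centralizer in $D$ reduces to its center: any further element of $G'$, whose relevant power is central by the radical condition, is forced (via Amitsur's structure form $\SL(2,5)\times M$ and the centralizer computation) to lie in $\SL(2,5)$ already, giving $G' \cong \SL(2,5)$.

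The main obstacle I anticipate is precisely this last pinning-down in the non-solvable case: showing that the radical-over-$F$ condition excludes the extra metacyclic direct factor $M$ permitted by Amitsur's theorem and leaves $\SL(2,5)$ alone. This is where the interplay between the group-theoretic center of $G'$, the center $F$ of $D$, and the centralizer of $F[\SL(2,5)]$ in $D$ must be handled with care; by contrast, the torsion/reduced-norm argument of the first part is comparatively routine once the reduced norm is available.
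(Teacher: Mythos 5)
Your first two steps are sound and essentially reproduce the paper's own argument: the paper establishes torsion via the norm $N_{K/F}$ of the regular representation on the finite-dimensional division subring generated by the commutator entries (your reduced-norm variant is the same mechanism, and your computation $1=\mathrm{Nrd}(g)^n=\mathrm{Nrd}(g^n)=g^{nd}$ is correct), and it then upgrades torsion to local finiteness by viewing $G'$ as a locally linear group and citing Schur's theorem, which is your Burnside--Schur step. The genuine gap is in the dichotomy. You split into ``$G'$ solvable'' versus ``$G'$ not solvable,'' and in the second case you assert that a locally finite non-solvable group contains a finite non-solvable subgroup. That is false in general: there are locally finite groups that are locally solvable yet not solvable (McLain's group is a standard example), and in such a group every finite subgroup is solvable, so your argument says nothing about the possibility that $G'$ is non-solvable but locally solvable. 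The paper avoids this by splitting on local solvability instead: if $G'$ is not locally solvable, then by definition some finite(ly generated, hence finite) subgroup is non-solvable and Amitsur applies; if $G'$ is locally solvable, then Zaleskii's theorem \cite{zales} --- locally solvable subgroups of division rings are solvable --- gives $G'$, hence $G$, solvable. Your proposal never invokes Zaleskii or any substitute for it, and this is exactly the non-trivial ingredient needed to make your case split legitimate inside $D^*$.

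Conversely, the step you flag as the main obstacle --- pinning down $G'\cong\SL(2,5)$ once a copy of it is found --- is in fact immediate and needs neither the radical hypothesis a second time nor your sketched centralizer/absolute-irreducibility argument. If $S\leq G'$ with $S\cong\SL(2,5)$ and $G'$ is locally finite, then every finite subgroup $T$ of $G'$ containing $S$ is non-solvable, hence $T\cong\SL(2,5)$ by Amitsur's theorem \cite[2.1.11, p.51]{shir-weh}, and $|T|=120=|S|$ forces $T=S$; since $G'$ is the directed union of such $T$, we get $G'=S$. (The ``extra metacyclic direct factor'' you worry about arises in Amitsur's classification for the binary tetrahedral group $\SL(2,3)$, not for the binary icosahedral group; the non-solvable case of the classification is $\SL(2,5)$ alone, which is why no centralizer analysis is required.) So the repair is: replace your solvable/non-solvable split by the locally-solvable/not-locally-solvable split backed by Zaleskii, and replace your centralizer sketch by the order count above; with those two changes your proof closes and coincides with the paper's.
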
 
\begin{proof}  In the first, we show that if an element $x\in D'$ is radical over $F$ then $x$ is periodic. Indeed, assume that $x=[x_1,y_1]\ldots [x_k,y_k]$ and $x^n=a\in F$ for some positive integer $n$. The division subring $K=F(x_1,y_1,\ldots,x_k,y_k)$ generated by $x_1,y_1,\ldots,x_k,y_k$ over $F$ in $D$ is a finite dimensional vector space over $F$, say, of degree $m=[K:F]$. Hence,
	$$1=N_{K/F}(x)^n=N_{K/F}(x^n)=N_{K/F}(a)=a^m,$$
where $N_{K/F}$ denotes the norm of $K$ to $F$. So $x^{nm}=1$, and $x$ is periodic.
Therefore, $G'$ is torsion. Since $D$ is locally finite, $G'$ can be viewed as locally linear group, so, by Schur's Theorem \cite[p.154]{lam}, $G'$ is locally finite. The Amitsur's Theorem \cite[2.1.11, p.51]{shir-weh} shows that the only unsolvable finite subgroup of a division ring is $\SL(2,5)$. Hence, if $G'$ is not locally solvable, then $G'\cong \SL(2,5)$. Now, assume that $G'$ is locally solvable. Then, by Zaleskii's result \cite{zales}, $G'$ is solvable, so is $G$.
\end{proof}
\begin{lemma}\label{lem:2} Let $D$ be a division ring with center $F$, $G$ a subnormal subgroup of $D^*$ and assume that $M$ is a maximal subgroup of $G$. If $[M:Z(M)]<\infty$, then $M$ is abelian. 
\end{lemma}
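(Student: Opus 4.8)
The plan is to reduce the statement to a commutativity question about the division subring generated by $M$, and then to play the maximality of $M$ off against the subnormality of $G$. First I would apply Schur's theorem: since $[M:Z(M)]<\infty$, the derived subgroup $M'$ is finite, so it is enough to show that $M$ is commutative. Next, note that every element of $Z(M)$ commutes with $M$ and with $F$, hence with the entire division subring $K:=F(M)$ generated by $M$ over $F$; therefore $Z(M)\subseteq Z(K)$, and $L:=F(Z(M))$ is a subfield of the center $Z(K)$. Picking coset representatives $g_1,\dots,g_n$ of $Z(M)$ in $M$, where $n=[M:Z(M)]$, and using that each product $g_ig_j$ lies in a coset $g_kZ(M)$ with $Z(M)\subseteq L$, one sees that the $L$-span $\sum_{i=1}^{n}Lg_i$ is closed under multiplication; being a finite-dimensional domain over the field $L$, it is a division ring, and since it contains both $M$ and $F$ it coincides with $K$. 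Thus $K$ is finite-dimensional over $L$, in particular centrally finite. As $M\subseteq K^{*}$, it now suffices to prove that $K$ is commutative.

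Assume, for a contradiction, that $K$ is noncommutative; then $M$ is noncentral in $D$. I would first dispose of the case in which $M$ is normal in $G$. If $M\trianglelefteq G$, then $M$ is subnormal in $D^{*}$, and because it is noncentral a standard theorem on subnormal subgroups of a division ring gives $C_D(M)=F$ and $F(M)=D$. Together with $Z(M)\subseteq Z(K)=Z(D)=F$ this forces $Z(M)=M\cap F$, so $[M:M\cap F^{*}]<\infty$; in particular every element of $M$ has a power in $F$, i.e. $M$ is radical over $F$. A Kaplansky-type theorem now applies: a subnormal subgroup that is radical over the center is central, whence $M\subseteq F$, contradicting noncentrality. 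Hence in this case $M$ is abelian.

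The genuinely difficult case, which I expect to be the main obstacle, is when $M$ is \emph{not} normal in $G$. Here the normal closure $M^{G}$ is a normal subgroup of $G$ containing the maximal subgroup $M$, so by maximality either $M^{G}=M$, returning us to the normal case, or $M^{G}=G$; thus the $G$-conjugates of $M$ generate $G$. Maximality also shows that every element centralizing $M$ normalizes it, so $C_{G}(M)\subseteq N_{G}(M)=M$ and therefore $C_{G}(M)=Z(M)$, a subgroup of finite index in $M$. The plan is to exploit the centrally finite division ring $K=F(M)$ through the Cartan--Brauer--Hua theorem: any element of $D^{*}$ normalizing $M$ normalizes $K$, and I would try to promote the available normalization (by $M$ itself, together with the subnormal chain joining $G$ to $D^{*}$ and the fact that $M^{G}=G$) to full invariance of $K$ under $D^{*}$. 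Cartan--Brauer--Hua would then force $K\subseteq F$ or $K=D$: the former contradicts noncommutativity of $K$, while the latter makes $D$ centrally finite and returns the argument to the radical-over-center contradiction of the previous paragraph. The delicate point -- and the crux of the whole proof -- is controlling the conjugates $gKg^{-1}$ for $g\notin N_{D^{*}}(M)$, i.e. converting the local normalization coming from maximality and subnormality into the global invariance required by Cartan--Brauer--Hua.
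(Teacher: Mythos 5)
Your reduction to a centrally finite division subring is sound and is essentially the paper's own opening move (the paper spans a transversal of $Z(M)$ over $E=P(Z(M))$ rather than over $L=F(Z(M))$, but both constructions yield a division subring $K$ with $M\subseteq K^*$ and $[K:Z(K)]<\infty$). The genuine gap is exactly where you flag ``the crux'': in the case $M\not\trianglelefteq G$ you offer only a plan, and the plan would not work as stated, because there is no mechanism for promoting normalization of $K$ by $M$ and the subnormal chain to invariance under all of $D^*$, which is what Cartan--Brauer--Hua requires; $K$ need not be normalized even by $G$. What you miss is that maximality already gives a clean dichotomy with no normality hypothesis on $M$: since $M\subseteq K^*\cap G\subseteq G$, either $K^*\cap G=M$ or $G\subseteq K^*$. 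In the first case $M$ is subnormal in $K^*$ (intersect the subnormal chain of $G$ in $D^*$ with $K^*$), and the paper then applies the theorem of Mahdavi-Hezavehi, Mahmudi and Yasamin that finitely generated subnormal subgroups of a division ring are central: writing $M=HZ(M)$ with $H=\ts{x_1,\dots,x_m}$ generated by the transversal, one has $M'\leq H\trianglelefteq M$, so $H$ is a finitely generated subnormal subgroup of $K^*$, hence $H\subseteq Z(K)$ and $M$ is abelian, a contradiction. In the second case Stuth's theorem --- the correct subnormal-subgroup analogue of Cartan--Brauer--Hua, applicable because the subnormal subgroup $G$ lies in $K^*$ --- gives $K\subseteq F$ or $K=D$; in the latter case adjoining a single $x\in G\setminus M$ yields a finitely generated normal subgroup $H=\ts{x,x_1,\dots,x_m}$ of $G$ with $G=HZ(M)$, which the same finitely-generated theorem forces into $F$, again a contradiction. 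This theorem on finitely generated subnormal subgroups, not Cartan--Brauer--Hua, is the engine of the proof, and its absence is the hole in your argument.

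There is also a secondary flaw in your normal case: the ``Kaplansky-type theorem'' you invoke --- a subnormal subgroup radical over the center is central --- is, in that generality, precisely Herstein's conjecture, which is open (Herstein proved only the periodic case). Your case can be repaired without it: by Schur's theorem $[M:Z(M)]<\infty$ makes $M'$ finite, and $M'\trianglelefteq M$ subnormal in $D^*$ makes $M'$ a finite, hence periodic, subnormal subgroup of $D^*$, so $M'\subseteq F$; then $M$ is solvable, and a noncentral solvable subnormal subgroup of $D^*$ cannot exist by Stuth's theorem. But note that the paper's argument never splits on whether $M$ is normal in $G$ at all, so once the dichotomy $K^*\cap G=M$ or $G\subseteq K^*$ is in place, your entire case distinction becomes unnecessary.
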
 
\begin{proof} Suppose that $M$ is non-abelian and $\{x_1, \ldots, x_m\}$ is a tranversal of $Z(M)$ in $M$. Denote by $E$ the subfield of $D$ generated by $Z(M)$ over the prime subfield $P$ of $F$, that is $E=P(Z(M))$, and put
	$$K=\Big\{\sum_{i=1}^ma_ix_i\vert a_i\in E\Big\}.$$
	Clearly, $K$ is a subring of $D$. Moreover, $K$ is a division subring of $D$ because it is a finite dimensional vector space over its center $E\subseteq Z(K)$. Also, we have $C_K(M)=Z(K)$. Since $M$ is a maximal subgroup of $G$, either $M=K^*\cap G$ or $G\subseteq K^*$. 
	
	\bigskip
	\textit{Case 1:} $M=K^*\cap G.$
	
	In this case, $M$ is subnormal in $K^*$. For $H=\ts{x_1, \ldots, x_m}$, we have $M=HZ(M)$, hence $M'\leq H$, so $H$ is normal in $M$. Therefore, $H$ is a finitely generated subnormal subgroup of $K^*$, and by \cite[Theorem 1]{fg}, $H\subseteq Z(K)$. In particular, $H$ is abelian, and this implies that $M$ is abelian too, but this is a contradiction because by the assumption, $M$ is non-abelian. 
	
	\bigskip
	\textit{Case 1:} $G\subseteq K^*.$
	
	By Stuth's theorem, either $K\subseteq F$ or $K=D$. If $K\subseteq F$, then $M\subseteq F$ that is a contradiction. Hence, $K=D$, so $D$ is centrally finite and $C_D(M)=Z(D)=F$. Take an arbitrary element $x\in G\setminus M$ and set $H=\ts{x, x_1, \ldots, x_m}$. By maximality of $M$ in $G$, we have $G=HZ(M)$. Hence, $G'\subseteq H$, so $H$ is normal in $G$, and consequently, $H$ is a finitely generated subnormal subgroup of $D^*$. By \cite[Theorem 1]{fg}, $H\subseteq F$. It follows $M\subseteq F$ that is impossible.
\end{proof} 

\begin{theorem}{\rm (\cite[Theorem 3.4]{hai-tu})} Let $D$ be a locally finite division ring with center $F$ and $G$ a subnormal subgroup of $D^*$. Assume that $M$ is  a non-abelian maximal subgroup of $G$. If $M$ contains no non-cyclic free subgroups, then $[D:F]<\infty$, $F(M)=D$, and there exists a maximal subfield $K$ of $D$ such that $K/F$ is a Galois extension, $\mathrm{Gal}(K/F)\cong M/K^*\cap G$ is a finite simple group, and $K^*\cap G$ is the $FC$-center. Also, $K^*\cap G$ is  the Fitting subgroup of $M$.
\end{theorem}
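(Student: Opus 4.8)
The plan is to produce a maximal abelian normal subgroup of $M$, promote it to the multiplicative group of a maximal subfield, and then read off the Galois-theoretic structure from the conjugation action, using the no-free-subgroup hypothesis only to force $D$ to be centrally finite. Concretely, since $M$ is non-abelian, Zorn's lemma yields a maximal abelian normal subgroup $N \trianglelefteq M$ with $N \neq M$; as the elements of $N$ commute, $K := F(N)$ is a (commutative) subfield of $D$, and because $M$ normalises $N$ it normalises $K$, giving a homomorphism $\rho \colon M \to \Aut(K/F)$. I expect $N$ to coincide with the $FC$-center: every element of $N$ has finite conjugacy class once $M/N$ is shown finite, and conversely an $FC$-element of $M$ centralises a finite-index part of $M$ and, by local finiteness, lands in $C_D(K)$. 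Lemma~\ref{lem:2} enters here: since $M$ is non-abelian, $[M:Z(M)] = \infty$, so $N$ is strictly larger than $Z(M)$ and is genuinely non-central, which is what makes the action $\rho$ nontrivial.

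The heart of the argument --- and the step that replaces the erroneous appeal to \cite[Theorem~5]{hai-ngoc} --- is to show $[D:F] < \infty$. Here I would use the no-free-subgroup hypothesis through the Tits alternative: because $D$ is locally finite, every finitely generated subgroup of $M$ lies in a finite-dimensional $F$-division algebra, hence is linear, so the absence of non-cyclic free subgroups makes each such subgroup solvable-by-finite. The goal is to leverage the maximality of $M$ in $G$ to globalise this: if $[K:F]$ were infinite, the fixed field of $\rho(M)$ together with its centraliser would produce a proper $M$-invariant division subring strictly between $F$ and $D$, contradicting maximality via Stuth's theorem and \cite[Theorem~1]{fg}, exactly as in the proof of Lemma~\ref{lem:2}. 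This forces $[K:F] < \infty$, hence $\rho(M)$ finite, $C_D(K) = K$ so that $K$ is a maximal subfield, $C_M(N) = K^* \cap M = N$, $[D:F] = [K:F]^2 < \infty$, and $F(M) = D$.

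With $D$ centrally finite the Galois picture is routine: the fixed field of $\rho(M)$ must be $F$ (any larger fixed field would again give a proper $M$-invariant subfield contradicting $F(M)=D$), so $K/F$ is Galois and $\rho$ maps $M$ onto $\mathrm{Gal}(K/F)$ with kernel $C_M(K) = N$, yielding $M/N \cong \mathrm{Gal}(K/F)$. Simplicity of this finite group then follows from maximality of $M$: an intermediate normal subgroup $N \lneq H \lneq M$ would, by the Galois correspondence, come from a proper intermediate subfield and produce a subgroup of $G$ strictly between $M$ and $G$, which is forbidden. To identify the two possible isomorphism types I would invoke Amitsur's theorem together with Lemma~\ref{lem:1}: either $M$ is solvable, forcing $\mathrm{Gal}(K/F) \cong \Z/p$, or $M$ contains a finite non-solvable subgroup, which must be $\SL(2,5)$, so that $M/N \cong \SL(2,5)/\{\pm 1\}$ is the only non-abelian possibility.

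Finally, I would check $N = K^* \cap G$ and the two characterisations of $N$. The inclusion $K^* \cap M = N$ is already in hand; that $K^* \cap G \subseteq M$ follows because $K^* \cap G$ commutes with $N$, so $M(K^* \cap G)$ is a subgroup of $G$ containing $M$, and maximality rules out its being all of $G$ (else the simple quotient would collapse). Since $[M:N] < \infty$ and $N$ is abelian, $N$ lies in the $FC$-center, and the reverse inclusion uses $C_D(K) = K$; thus $N$ is the $FC$-center. That $N$ is the Fitting subgroup is then immediate, since $N$ is nilpotent and normal while any larger nilpotent normal subgroup would descend to a nontrivial nilpotent normal subgroup of the simple group $M/N$, which is impossible. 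The main obstacle throughout is the centrally-finite reduction of the second paragraph; everything after it is standard Galois theory and bookkeeping, whereas that step is precisely where the no-free-subgroup hypothesis must be converted, via maximality rather than via \cite[Theorem~5]{hai-ngoc}, into finiteness of $[D:F]$.
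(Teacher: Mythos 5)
There is a genuine gap, and it sits exactly at the point the whole corrected proof is designed to handle. You take a maximal abelian normal subgroup $N\trianglelefteq M$ and assert that, since Lemma~\ref{lem:2} gives $[M:Z(M)]=\infty$, ``$N$ is strictly larger than $Z(M)$ and is genuinely non-central.'' This is a non sequitur: in a general group the center can itself be the unique maximal abelian normal subgroup even when it has infinite index (e.g.\ in $Z\times S$ with $S$ non-abelian simple, or in a perfect central extension, every abelian normal subgroup is central). Producing an abelian normal subgroup of $M$ \emph{not} contained in $F$ (equivalently, not in $Z(M)=M\cap F$) is precisely the hard content of the paper's argument: it first reduces $M$ to abelian-by-(locally finite) via Tits' alternative and \cite[3.3.9, 3.3.2, 5.5.1]{shir-weh}, and then, when the abelian piece $A$ lies in $F$, it uses Lemma~\ref{lem:1} (radical over $F$ forces $M'$ locally finite, and either $M'\cong\SL(2,5)$ or $M$ solvable) and splits into three cases: $\Char(F)>0$; $\Char(F)=0$ with $[D:F]<\infty$ (where Lemma~\ref{lem:2} is actually used); and $\Char(F)=0$ with $[D:F]=\infty$, where the $\SL(2,5)$ configuration --- exactly the perfect-central-extension obstruction your inference ignores --- must be excluded by a subnormality argument through $F(M')$, Stuth's theorem, and \cite[Theorem 11]{hai-ngoc}, followed by a derived-series descent to a nilpotent $M^{(r)}\not\subseteq F$ handled via \cite[Theorem 1]{fg} and \cite[Lemma 3.1]{hai-tu}. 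None of this is recoverable from Zorn's lemma plus $[M:Z(M)]=\infty$.

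A second, lesser weakness: your centrally-finite reduction (``the fixed field of $\rho(M)$ together with its centraliser would produce a proper $M$-invariant division subring\dots'') is a sketch, not an argument --- before $[K:F]<\infty$ is known you have no Galois correspondence to speak of, and you have not shown $\rho(M)$ is finite, nor that the relevant subrings are normalized by $G$ rather than merely by $M$. The paper's route is different and more economical: having found an abelian normal $C\not\subseteq F$, it picks $\alpha\in C\setminus Z(M)$, observes that $\alpha^M$ lies in the \emph{field} $F(\alpha^M)$ (because $\alpha^M\subseteq C$ is commutative) and consists of roots of one minimal polynomial, so $\alpha$ is an $FC$-element; then \cite[Theorem 3.2]{hai-tu} delivers $[D:F]<\infty$, the maximal subfield $K$, the Galois and simplicity statements, and the $FC$-center/Fitting identifications in one stroke, after the hypothesis $H^*\cap G\subseteq K$ (for $H=C_D(K)$) is verified via \cite[Theorem 11]{hai-ngoc}. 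Your closing paragraph essentially re-derives fragments of that cited theorem, but with circular dependencies (finiteness of $M/N$ is assumed where it is needed to prove itself, and simplicity is invoked to prove $K^*\cap G\subseteq M$ before it has been established). As written, the proposal does not close.
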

\begin{proof} Assume that $M$ does not contain non-cyclic free subgroups. We claim that $F(M)=D$. Indeed, if $F(M)\neq D$, then, by Stuth's theorem, $F(M)$ is not normalized by $G$. So, by maximality of $M$ in $G$, we have $F(M)\cap G=M$. Thus, $M$ is a non-abelian subnormal subgroup of $F(M)^*$ containing no non-cyclic free subgroup that contradicts to \cite[Theorem 11]{hai-ngoc}. Therefore, $F(M)=D$ as we have claimed. From the last equality, it follows $Z(M)=M\cap F$.
Since $D$ is locally finite, $M$ is locally linear group. So, by Tit's Alternative \cite{tits}, $M$ is locally solvable-by-finite. By \cite[3.3.9, p.103]{shir-weh}, $M$ is (locally solvable)-by-(locally finite). Note that the only unipotent element of a division ring is $1$, so, by \cite[3.3.2, p.96]{shir-weh}, $M$ is solvable-by-(locally finite). Therefore, there exists a solvable normal subgroup $H$ of $M$ such that $M/H$ is locally finite. By \cite[Theorem 5.5.1, Part a), p.199]{shir-weh}, there exists an abelian normal subgroup $A$ of $M$ such that $H/A$ is locally finite. Hence, by \cite[14.3.1, p.429]{rob}, $M/A$ is locally finite. Thus, $M$ is abelian-by-(locally finite). 

We claim that there exists an abelian normal subgroup of $M$ which is not contained in $F$. Indeed, if $A$ is not contained in $F$, then $A$ is such a subgroup. Otherwise,  $A\subseteq F$ and it follows that  $M$ is radical over $F$ because $M/A$ is locally finite. By Lemma \ref{lem:1}, $M'$ is locally finite. 

Now, let us consider three possible cases.

\bigskip
\textit{Case 1:} $\Char(F)>0.$

\bigskip

Since $M'$ is locally finite, $M'$ is abelian. Denotes by $B$ the maximal abelian normal subgroup of $M$ containing $M'$. The proof of \cite[Theorem 3.3 (i)]{hai-tu} shows in particular that $B$ is a normal subgroup of $M$ which is not contained in $F$.

\bigskip
\textit{Case 2:} $\Char(F)=0$ and $[D:F]<\infty$.

\bigskip	

By Tits' Alternative \cite{tits}, there exists a normal solvable subgroup $S$ of $M$ such that $M/S$ is finite. By \cite[Lemma 3]{free}, $S$ contains a normal abelian subgroup $H$ such that $S/H$ is finite. Then, $B=Core_M(H)$ is a normal abelian subgroup of $M$ of finite index. If $B\subseteq F$, then $B\subseteq Z(M)$, so it follows that $[M:Z(M)]<\infty$. Hence, in view of Lemma \ref{lem:2}, $M$ is abelian that is a contradiction. 

\bigskip
\textit{Case 3:} $\Char(F)=0$ and $[D:F]=\infty.$

\bigskip

Assume that $M'$ is not solvable. By Lemma \ref{lem:1}, $M'\cong \SL(2,5)$. If $F(M')\cap G\subseteq M$, then $M'\triangleleft F(M')\cap G$ and this implies that $M'$ is subnormal subgroup of $F(M')$. Since $M'$ contains no non-cyclic free subgroup, in view of \cite[Theorem~11]{hai-ngoc}, $M'$ is contained in the center of $F(M')$. In particular, $M'$ is abelian that is impossible because $M'\cong \SL(2,5)$. Thus, $F(M')\cap G\not\subseteq M$. Since   $M$ is maximal in $G$, $(F(M')\cap G)M=G$, so $G$ normalizes $F(M')^*$. By Stuth's theorem, we have $F(M')=D$ that entails $[D:F]<\infty$, a contradiction.
Therefore, $M'$ is  solvable. Let $r$ be a natural number such that $M^{(r)}\not\subseteq F$ and $M^{(r+1)}\subseteq F$. Then, $M^{(r)}$ is nilpotent, so in view of \cite[2.5.2, p. 73]{shir-weh}, there is an abelian normal subgroup $B$ of $M^{(r)}$ of finite index. Now, consider the division subring $D_1=F(M^{(r)})$ of $D$. Clearly, $M^{(r)}$ normalizes $D_1$. Suppose that $G$ also normalizes $D_1$. Since $M^{(r)}\not\subseteq F$, by Stuth'theorem, we have $F(M^{(r)})=D_1=D$. If $\{x_1, \ldots, x_m\}$ is a tranversal of $B$ in $M$, then $F(M^{(r)})=\sum_{i=1}^mx_iF(B)$. This implies the right dimension $[D:F(B)]_r$ is finite. By Double Centralizer Theorem, $[D:F]<\infty$, a contradiction. Hence, $D_1$ is not normalized by $G$. Then, by maximality of $M$ in $G$, we have $D_1\cap G\subseteq M$. This implies that $M^{(r)}$ is subnormal in $D_1$, so $M^{(r)}$ is a nilpotent subnormal subgroup of $D_1$. By Lemma \cite[Lemma 3.1]{hai-tu}, $M^{(r)}\subseteq Z(D_1)$, so $M^{(r)}$ is an abelian normal subgroup of $M$ which is not contained in $F$. Thus, we have proved that in all three cases, there exists an  abelian normal subgroup of $M$ which is not contained in $F$, as claimed. 
The claim is proved.
	
Now, assume that $C$ is an abelian normal subgroup of $M$ which is not contained in $F$. Then, $C\not\subseteq Z(M)$ because as we have noted above, $Z(M)=M\cap F$. Take an element $\alpha\in C\setminus Z(M)$. Since $Z(M)=M\cap F$ and $M/Z(M)$ is locally finite, $\alpha$ is algebraic over $F$. Set $K=F(\alpha^M)$ and $H=C_D(K)$. The condition $C\triangleleft M$ implies that the elements of $\alpha^M$ in the field $F(\alpha^M)$ have the same minimal polynomial over $F$, so $|\alpha^M|<\infty$, hence $\alpha$ is an $FC$-element. Therefore, we can apply \cite[Theorem~3.2]{hai-tu} for $\alpha, K$ and $H$. In particular, $H^*\cap G$ is a subgroup of $M$. Clearly, $H^*\cap G$ is subnormal in $H^*$. So, by \cite[Theorem 11]{hai-ngoc}, we have
$$H^*\cap G\subseteq Z(H)=C_H(H)\subseteq C_D(H)=C_D(C_D(K))=K.$$
Therefore, all the requirements of  \cite[Theorem 3.2]{hai-tu} are satisfied. The proof of the theorem is now complete.
\end{proof}

\end{document}